\newtheorem{mainthm}{Theorem}
\newtheorem{theorem}{Theorem}[section]
\newtheorem{lemma}[theorem]{Lemma}
\newtheorem{definition}[theorem]{Definition}
\newtheorem{corollary}[theorem]{Corollary}
\newtheorem{problem}[theorem]{Problem}
\newtheorem{proposition}[theorem]{Proposition}
\newtheorem*{theorem A}{Theorem A}
\newtheorem*{corollary C}{Corollary C}
\newtheorem*{theorem B}{Theorem B}
\theoremstyle{definition}
\begin{document}
\author[A. Fakhari]{Abbaas Fakhari}
\address{\footnotesize \centerline{Department of Mathematics, Shahid
Beheshti University, } \centerline{Tehran 19839, Iran}}
\email{a\_fakhari@sbu.ac.ir}
\title[Connectedness of the set of central Lyapunov exponents]
{Connectedness of the set of central Lyapunov exponents}
\keywords{Homoclinic class, partially
hyperbolic, Lyapunov exponent, $C^1$-generic}
\subjclass[2010]{37B20, 37C29,37C50}
\maketitle
\begin{abstract}
We show that there is a residual subset
$\mathcal{R}$ of $\textsl{Diff}^{\,1}(M)$ such that for any
$f\in\mathcal{R}$ and any partially hyperbolic homoclinic class
$H(p,f)$ with one dimensional center direction, the set of central 
Lyapunov exponents associated with the ergodic with either full support or positive entropy is an
interval. 
\end{abstract}
\section{Introduction}
The main question we try to answer is a reformulation of a fact initiated by Sigmund in a short article (\cite{si}). There, he studied the connectedness of the set of hyperbolic ergodic measure. He established path connectedness of this space in the case of transitive topological Markov shifts and as a corollary, of transitive Axiom Adiffeomorphisms. Since Sigmund's work the interest to this problem has been lost. 

By definition, hyperbolic dynamical systems have nonzero Lyapunovexponents. However, they are not generic in the space of all
dynamical systems. This necessitated weakening the notion of hyperbolicity. A wider class is that of partially hyperbolic systems
studied by many authors (for a complete review, see \cite{bp,hp,hhu}). A partially hyperbolic set may admit zero Lyapunov exponents along
their central directions. As a recent efforts, in Sigmund direction, in \cite{gt} raised a question of whether the space of ergodic measures invariant under some partially hyperbolic systems is path connected. Our main question here is: 

\bigskip
\textbf{Question.}~Is the set of central Lyapunov vectors associated
with the ergodic measures on a partially hyperbolic set convex?

\bigskip
In general, the answer is negative. In \cite{lor}, the authors have
studied destroying horseshoes via heterodimensional cycles. These
maps are partially hyperbolic with a one dimensional center
direction. They proved that every ergodic measure is hyperbolic, but
the set of Lyapunov exponents in the central direction has a  gap.
As known, the existence of heterodimensional cycles isn't a generic
phenomenon. There are examples of open sets of partially hyperbolic
maps whose central Lyapunov exponents contains a convex set. Now, a
question may be arisen:
\begin{center}
{\textbf{\it what can be said in generic mode?}}
\end{center}
There are partial answers to the question in the case of generic
``locally maximal" or ``Lyapunov stable" homoclinic classes
\cite{abcdw,csy}. Here, we give a positive answer to the
question in the case of one dimensional central direction. 

Let $M$ be a compact boundless Riemannian manifold and $f$ be a
diffeomorphism on it. For a periodic point $p$ of $f$, we denote by
$\pi(p)$ the period of $p$. For a hyperbolic periodic point $p$ of
$f$ of period $\pi(p)$ the sets
\begin{center}
 $W^s(p)=\{x\in M : f^{\pi(p)n}(x)\to p  ~\textit {as}~ n\to\infty\},~\mbox{and }$\\
  \hspace{-1cm}$W^u(p)=\{x\in M : f^{-\pi(p)n}(x)\to p ~\textit{ as}~
  n\to\infty\};$
\end{center}
are injectively immersed submanifolds of $M$. A point $x\in
W^s(p)\,\cap\, W^u(p)$ is called a {\it homoclinic point} of $f$
associated with $p$, and it is called a {\it transversal homoclinic
point} of $f$ if the above intersection is transversal at $x$. The
closure of the transversal homoclinic points of $f$ associated with
the orbit of $p$ is called the {\it homoclinic class} of $p$, and  denoted by $H(p,f)$.
 Two saddles $p$ and $q$ are called {\it homoclinically related}, and
write $p\sim q$, if
$$W^s(\mathcal{O}(p))\,{\overline{\pitchfork}}\,W^u(\mathcal{O}(q))\not=\phi\,\,\,\mbox{
and}\,\,\,W^u(\mathcal{O}(p))\,{\overline{\pitchfork}}\,W^s(\mathcal{O}(q))\not=\phi
.$$ Note that, by {\it Smale's transverse homoclinic point theorem}
, $H(p,f)$ coincides with the closure of the set of all hyperbolic
periodic points $q$ with $p\sim q$.
\begin{definition}
A compact invariant set $\Lambda$ is {\it partially hyperbolic} if
it admits a splitting $T_\Lambda M=E^{ss}\oplus E^c\oplus E^{uu}$
such that
\[\|Df|_{E^{ss}}\|<\|Df|_{E^{c}}\|<\|Df|_{E^{uu}}\|,\]
furthermore, $E^{ss}$ is uniformly contraction and $E^{uu}$ is
uniformly expansion; it means
\[\|Df|_{E^{ss}}\|<1<\|Df|_{E^{uu}}\|.\]
\end{definition}
The bundles $E^{ss}$ and $E^{uu}$ of a partially hyperbolic set are
always uniquely integrable. Denote by $W^{ss}_{loc}(x)$ (resp.
$W^{uu}_{loc}(x)$) the {\it local strong stable} (resp. {\it
unstable}) manifold of $x$ tangent to $E^{ss}(x)$ (resp.
$E^{uu}(x)$) at $x$ (\cite{bp,hp,hhu}). Recall that if a diffeomorphism $f$ has
a partially hyperbolic set $\Lambda$ then there is an open
neighborhood $U$ of $\Lambda$ and a neighborhood $\mathcal{U}$ of
$f$ such that the maximal invariant set $\tilde{\Lambda}_g$ of a
diffeomorphism $g\in\mathcal{U}$ which is entirely contained in $U$
is partially hyperbolic.

Denote by $\mathcal{M}_f (M)$ the set of $f$-invariant probability
measures endowed with its usual topology; i.e., the unique
metrizable topology such that $\mu_n\to\mu$ if and only if $\int
f\,d\mu_n\to\int f\,d\mu$ for every continuous function
$f:M\to\mathbb{R}$. The set of all ergodic elements of
$\mathcal{M}_f(M)$ supported on an invariant set $\Lambda$ is
denoted by $\mathcal{M}_e(f|_\Lambda)$.

A point $x$ in a partially hyperbolic set $\Lambda$ is called {\it
regular} if the following limit exists.
\[\lim_{n\to\infty}\frac{1}{n}\log\|Df^n(x)|_{E^c(x)}\|.\]
By Oseledec's theorem, given any invariant measure $\mu$ the
limits exist $\mu$-a.e.. The limits above are called {\it central
Lyapunov exponent} of $x$. When $\mu$ is ergodic the limit doesn't
depend on initial point $\mu$-a.e.. We recall that
an invariant set $\Lambda$ of a diffeomorphism $f$ is {\it Lyapunov
stable} if for any neighborhood $U$ of $\Lambda$ there is a $V$ of
it such that $f^ n(V )\subset U$, for any $n\geq 0$. Surely, for any
hyperbolic periodic point $p$ in a Lyapunov stable invariant set
$\Lambda$ we have $W^u(p)\subset \Lambda$. The set $\Lambda$ is {\it
bi-Lyapunov stable} if it is Lyapunov stable for $f$ and $f^{-1}$.

Here, we try to build ergodic measure with middle Lyapunov exponents. There are two distinct strategies, approximation with periodic measures and flip-flop phenomenon. Although, ergodic measures constructed by the method of periodic
approximations may have full support, they are highly "repetitive" and are likely to have zero entropy. For constructing measure of positive entropy we use the flip-flop family introduced in \cite{bbd}.  A disadvantage of
this method is that the nature of the obtained measure prevents the measures to have full
support on the homoclinic class. In our main theorem, we benefit both method to build the desired measures. 
\begin{mainthm}
For a $C^1$-generic diffeomorphism $f$ and any
partially hyperbolic homoclinic class $H(p,f)$ of $f$ with the
splitting $E^s\oplus E^c\oplus E^u$ such that
\begin{itemize}
\item either $\textrm{dim}(E^c)=1$ or
\item $H(p,f)$ is bi-Lyapunov stable and $E^c$ has a dominated splitting $E^c_1\oplus E^c_2$ into one
dimensional bundles.
\end{itemize}
The central Lyapunov exponents associated with the ergodic measures
with support equal to $H(p,f)$ form a connected set. Furthermore, if $H(p,f)$ is bi-Lyapunov stable and $\textrm{dim}(E^1)=1$ then the set of central Lyapunov exponents associated with the ergodic measures supported on $H(p,)$ and have positive topological entropy forms a connected set.
\end{mainthm}
In fact, following \cite{bbd}, we prove that for any middle $\alpha$ there is an invariant set $\mathbf{K}_\alpha$ containing in the unstable lamination such that all points belong to it have Lyapunov exponents equal to $\alpha$. In particular, any ergodic measure supported on $\mathbf{K}_\alpha$ has Lyapunov exponent $\alpha$.
\begin{problem}
In general, is the set of Lyapunov vectors
associated with the set of ergodic measures on a generic homoclinic
class convex?
\end{problem}
Section 2, describes two examples concerning our main question.
The first example describe an open set of diffeomorphisms
whose center Lyapunov exponents contains an interval. The second example
provides a skew-product for which there is a gap in the center Lyapunov exponents.
Both constructions benefit a realization of a skew-product over a finite generators.
In section 3 we deal with the problem of approximating of ergodic measure by periodic ones. Proof of the Theorem A handled in sections 4 and 5.
\section{Robust and Non-generic Examples}
We need a bit of notations. Consider and iterated function system
$\textrm{IFS}(f_1,\dots,f_k)$ generated by continuous maps
$f_1,\dots,f_k$ on $X$. A sequence of iterates
$x_{n+1}=f_{\omega_n}(x_n)$ with $\omega_n \in \{1,\dots,k\}$ chosen
randomly and independently is called \emph{random orbit} starting at
$x_0=x$. The sequence of compositions $
f_{\omega}^n=f_{\omega_n}\circ\dots \circ f_{\omega_1}$ is the
\emph{orbital branch} corresponding to
$\omega=\omega_1\omega_2\dots\in\Sigma_k^+=\{1,\dots,k\}^\mathbb{N}$.
One can associate with the IFS a step skew-product $S:\Sigma_k^+\times
M\to\Sigma_k^+\times M$ given by
$S(\omega,x)=(\sigma(\omega),f_{\omega_0}(x))$. Considering a Markov
partition with $k$ element of a horseshoe in $\mathbb{S}^2$, one can
realize the skew-product as a diffeomorphism on $\mathbb{S}^2\times
M$ with a partially hyperbolic locally maximal invariant set
homeomorphic to $\Sigma_k^+\times M$.
\subsection{A Robust Example} Suppose that $f_0$ is a
Morse-Smale diffeomorphism on $\mathbb{S}^1$ with exactly one sink
and one source and $f_1$ is an irrational rotation, also assume that
both of them are sufficiently closed to  the identity map in the
$C^1$-topology. It is known that
\begin{itemize}
\item the corresponding step skew-product is robustly transitive on the
invariant set $\Lambda=\Sigma_2^+\times\mathbb{S}^1$, in the sense
that any perturbation of the initial skew-product is transitive on a
locally maximal invariant set $\tilde{\Lambda}$ homemorphic to
$\Sigma_2^+\times\mathbb{S}^1$.
\item $\tilde{\Lambda}$ is a local homoclinic class.
\end{itemize}
The conjugacy sends the central foliation of $S$ to the central
foliation of the perturbed diffeomorphism which are all circle.
%Although, the conjugacy may has a irregular along the transverse
%dirction to the center foliation generally, it doesn't occurs in our
%case. 
In fact, Gorodestki and Ilyashenko in \cite{gi} has proved the
H\"{o}lder continuity of the center manifolds of the perturbed
diffeomorphisms (see also Theorem B in \cite{vy}). Using this key
property, the authors in \cite{kn} have proved the existence of an
ergodic measure with full support whose center Lyapunov exponent is
zero. The same arguments can be adopted (with a duplicated proof) to
fill an interval with the center Lyapunov exponents associated with
the different ergodic measure. Furthermore, the ergodic measure may
be chosen in such a way that their support equal to the whole of
invariant set.
\begin{theorem}
For any perturbation of $S$, there is a non-degenerate interval $I$
containing in the set of central Lyapunov exponents associated with
the ergodic measure with full support.
\end{theorem}
%IFS$(f_0,f_1)$ is robustly minimal in the sense that for any
%$x\in\mathbb{S}^1$, any interval $I\subset \mathbb{S}^1$ and any
%perturbations $\tilde{f}_0$ and $\tilde{f}_1$ of $f_0$ and $f_1$,
%the branch orbit of $x$ under IFS$(\tilde{f}_0,\tilde{f}_1)$ meets
%$I$.
\subsection{A Non-generic Example with a Center Gap} The crucial
step of the example below (Lemma~\ref{lemcontraction}) borrowed from
\cite{lor}. Let $f_0:[0,1]\to[0,1]$ be a map satisfying the following
properties
\begin{itemize}
\item $f_0(0)=0, f_0(1)=1$ and  $f_0(x)>x$, for any $x\in(0,1)$;
\item $f_0^\prime$ is decreasing,
$f_0^\prime (1)=1/\alpha$ and $f_0^\prime (0)=\alpha$, for some
$\alpha>1$;
\end{itemize}
and for some $\beta>\alpha$, $f_1(x)=\beta(1-x)$ (see Figure \ref{F1}).
\begin{figure}[h]\label{F1}
\[\includegraphics[scale=1]{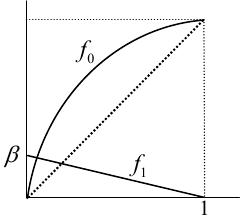}
\]
\caption{The graph of $f_0$ and $f_1$}
\label{F1}
\end{figure}
\begin{lemma}
\label{lemcontraction} Let $\omega\in\Sigma_2^{+}$ and $n_i$ be the
position of $i$-th 1 in $\omega$. There are $0<\lambda<1$ and
$C(\omega)$ such that for any $x\in[0,1]$,
$$|Df^{n_i}_\omega(x)|\leq C(\omega)\lambda^i.$$
\end{lemma}
Putting $\bar{0}=(0,0,\ldots)\in\Sigma_2^{+}$, the skew-product has
two fixed points $p=(\bar{0},0)$ and $q=(\bar{0},1)$. Suppose that
$\mu$ is an ergodic measure for the skew-product.
Then $\pi^*\mu$ is an ergodic measure for the shift map. If
$\pi^*\mu(\bar{0})=1$ then $\mu$ is $\delta_p$ or $\delta_q$
and hence has negative or positive center lyapunov exponents. If
$\pi^*\mu(\bar{0})<1$ then for $\mu$-almost every
$(\omega,x)\in \Sigma_2^{+}\times M$, 1 appears with positive
frequency $\gamma$ in $\omega$. By the lemma above,
$$\liminf \frac{1}{n_i}\log |Df^{n_i}_\omega(x)|\leq \liminf \frac{1}{n_i}\log C\lambda^i=\gamma\log\lambda<0.$$
that is, any ergodic measure is hyperbolic and there are ergodic
measures with positive and negative center Lyapunov exponents.
\begin{theorem}
For the skew-product generated by IFS$(f_0,f_1)$, any ergodic
measure is hyperbolic and there are ergodic measures whose center
Lyapunov exponents have different sign. In particular, the set of
center Lyapunov exponents has a gap.
\end{theorem}
\section{Ergodic Measure Approximated by Periodic Ones}
First, we state two following lemmas to guarantee the existence of
hyperbolic periodic points with desired central Lyapunov exponents
inside the homoclinic class.
\begin{lemma}(Pliss's Lemma \cite{pl})
Given $\lambda_0<\lambda_1$ and $\mathcal{O}(p)\subset
\tilde{\Lambda}$ there is some natural number $m$ such that if
$\prod_{j=0}^{t} \|Df|_{E^c(f^j(p_n))}\|\leq \lambda_0^t$ for some
$t\geq m$ then there is a sequence $n_1<n_2<\cdots<n_k\leq t$ such
that $\prod_{j=n_r}^{t} \|Df|_{E^c(f^j(p_n))}\|\leq
\lambda_1^{t-n_r}$, for any $1\leq r\leq k$.
\end{lemma}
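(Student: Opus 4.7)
The plan is to take logarithms and reduce Pliss's Lemma to a classical additive statement about bounded real sequences, then apply the standard ``tail maxima'' selection. Writing $c_j \eqdef \log\|Df|_{E^c(f^j(p_n))}\|$, $\alpha_i \eqdef \log\lambda_i$ for $i=0,1$, and $\delta \eqdef \alpha_1 - \alpha_0 > 0$, the hypothesis becomes
\[
\sum_{j=0}^{t} c_j \le (t+1)\alpha_0,
\]
and the desired conclusion at index $n$ reads $\sum_{j=n}^{t} c_j \le (t-n+1)\alpha_1$. Since $M$ is compact and $Df$ is continuous, there is a constant $B>0$ with $|c_j|\le B$ uniformly in $j$ and in the choice of orbit in $\tilde\Lambda$.

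Next I would introduce the normalized partial sums $S_n \eqdef \sum_{j=0}^{n-1}(c_j-\alpha_1)$ for $0\le n\le t+1$, with the convention $S_0 = 0$. The sought Pliss inequality at $n$ is exactly $S_n\ge S_{t+1}$, and the hypothesis gives $S_{t+1}\le -(t+1)\delta$. I would then declare $n\in\{0,\ldots,t\}$ a \emph{Pliss time} whenever $S_n \ge S_k$ for every $n\le k\le t+1$. By construction every Pliss time satisfies the desired tail estimate, so the entire argument reduces to producing enough Pliss times once $t$ is sufficiently large.

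For the counting, the increments $S_{n+1}-S_n = c_n-\alpha_1$ have modulus at most $B+|\alpha_1|$, while the sequence $\{S_n\}$ must drop by at least $(t+1)\delta$ between $n=0$ and $n=t+1$. Partitioning $\{0,\ldots,t+1\}$ into the maximal blocks delimited by consecutive Pliss times, the running maximum of $S$ on each block is attained at its left endpoint, so the net descent across a block is bounded by a single increment $B+|\alpha_1|$. Hence the number of Pliss times in $\{0,\ldots,t\}$ is at least $(t+1)\delta/(B+|\alpha_1|)$, and I would choose $m$ so that this lower bound exceeds the prescribed $k$.

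The main delicate point is the block estimate in the last paragraph: one must verify that between two consecutive Pliss times the maximum of $S$ really does sit at the left endpoint, so that the cumulative descent across the block is controlled by a single step size. Once this structural observation is in hand, everything else is straightforward bookkeeping and the choice of $m$ is explicit.
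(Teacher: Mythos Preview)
The paper does not prove this lemma at all: it is quoted with a citation to Pliss~\cite{pl} and then used as a black box in the proof of Lemma~2.4. So there is no ``paper's own proof'' to compare your proposal against.

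That said, your outline is the standard and correct argument. Two small remarks. First, your index conventions are slightly off relative to the statement (the hypothesis has $\lambda_0^{t}$, not $\lambda_0^{t+1}$, and similarly for the conclusion), and the statement as written does not actually \emph{prescribe} a value of $k$, so your last sentence about ``choosing $m$ so that the lower bound exceeds the prescribed $k$'' is addressing a stronger (density) version than what is literally stated; this is harmless, since the density version is what the application in Lemma~2.4 really uses. Second, the ``delicate block estimate'' you flag becomes transparent if you introduce the auxiliary non-increasing sequence $T_n\eqdef\max_{n\le k\le t+1}S_k$: then $T_n=T_{n+1}$ whenever $n$ is not a Pliss time, while at a Pliss time $T_n-T_{n+1}=S_n-T_{n+1}\le S_n-S_{n+1}\le B+|\alpha_1|$; telescoping gives $T_0-T_{t+1}\le P\,(B+|\alpha_1|)$ with $P$ the number of Pliss times, and since $T_0\ge 0$ and $T_{t+1}=S_{t+1}\le -(t+1)\delta$ you obtain $P\ge (t+1)\delta/(B+|\alpha_1|)$. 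This is exactly the content of your last paragraph, made precise.
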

\begin{lemma}(\cite{ps})\label{stable1}
For any $0<\lambda<1$, there is $\epsilon>0$ such that for $x\in
\tilde{\Lambda}$ which satisfies $\prod_{j=0}^{n-1}
\|Df|_{E^c(f^j(p_n))}\|\leq \lambda^n$, for all $n>0$, we have
$\textrm{diam}(f^n(W^{cs}_\epsilon(x))\to 0$. In other word, the
center stable manifold of $x$ with size $\epsilon$ is in fact stable
manifold.
\end{lemma}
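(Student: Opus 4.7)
The plan is to upgrade the pre-existing center-stable disk $W^{cs}_\epsilon(x)$, furnished by the invariant-manifold theory for dominated splittings, into a genuine stable manifold by propagating the averaged contraction hypothesis on the orbit of $x$ to nearby orbits. First I fix $\hat\lambda\in(\lambda,1)$. Since $E^c$ is a continuous factor of a dominated splitting, the function $y\mapsto\log\|Df|_{E^c(y)}\|$ is continuous on $\tilde\Lambda$, so uniform continuity yields $\delta>0$ with
\[
d(y,z)<\delta \quad\Longrightarrow\quad \|Df|_{E^c(y)}\|\leq (\hat\lambda/\lambda)\,\|Df|_{E^c(z)}\|.
\]
This is the device by which the average contraction on the orbit of $x$ is transmitted to neighboring orbits.

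Second, the invariant-manifold theorem for the dominated decomposition $E^{cs}\oplus E^{uu}$ provides $\epsilon_1>0$ and a family of locally invariant $C^1$ embedded disks $W^{cs}_{\epsilon_1}(y)$ tangent to $E^{cs}(y)=E^{ss}(y)\oplus E^c(y)$, with tangent planes confined to a narrow cone about $E^{cs}$. Consequently, for any $z$ in such a disk, $\|Df|_{T_zW^{cs}_{\epsilon_1}(y)}\|$ is bounded, up to a uniform factor depending only on the angle between $E^{ss}$ and $E^c$, by $\max(\|Df|_{E^{ss}(z)}\|,\|Df|_{E^c(z)}\|)$.

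Third, I run an induction on $n$. Choose $\epsilon\in(0,\min(\delta,\epsilon_1))$, let $y\in W^{cs}_\epsilon(x)$, and claim that there is a uniform $K\geq 1$ with
\[
d(f^n(x),f^n(y))\leq K\,\hat\lambda^n\,d(x,y) \quad\text{and}\quad f^n(y)\in W^{cs}_{\epsilon_1}(f^n(x))
\]
for every $n\geq 0$. Granting the claim up to step $n-1$, each $f^k(y)$ lies within $\delta$ of $f^k(x)$ for $0\leq k<n$, so the continuity choice combined with the hypothesis on $x$ yields $\prod_{j=0}^{n-1}\|Df|_{E^c(f^j(y))}\|\leq\hat\lambda^n$. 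Combined with the uniform contraction $\|Df|_{E^{ss}}\|\leq\mu<1$ and the cone control, integrating $\|Df^n\|$ along the cs-leaf produces the distance bound, and taking $\epsilon\leq\epsilon_1/K$ closes the induction. This forces $\textrm{diam}(f^n(W^{cs}_\epsilon(x)))\to 0$ as required.

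The main obstacle is the well-known circularity: the derivative estimate along the orbit of $y$ requires proximity to the orbit of $x$, yet that proximity is precisely the output of the estimate. Unwinding this cleanly---verifying that $K$ depends only on the modulus of continuity of $E^c$, on $\hat\lambda$, and on the uniform $E^{ss}$--$E^c$ angle, and not on the choice of $y$---is the technical heart of the argument, and is typically handled by invoking a Pliss-type selection of intermediate times along the cs-leaf to guarantee that the average estimate is never invalidated before the induction has caught up.
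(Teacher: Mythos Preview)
The paper does not prove this lemma; it is quoted from Pujals--Sambarino \cite{ps} and used as a black box, so there is no in-paper argument to compare against. Your outline is essentially the classical argument one finds there: fatten $\lambda$ to $\hat\lambda$, use uniform continuity of $y\mapsto\|Df|_{E^c(y)}\|$ to propagate the product bound from the orbit of $x$ to $\delta$-close points, and run an induction showing that $f^n(W^{cs}_\epsilon(x))$ stays inside the $\delta$-ball about $f^n(x)$ so the transfer remains valid at the next step. That is the right skeleton.

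Your final paragraph, however, misdiagnoses the remaining difficulty. The hypothesis here is that $\prod_{j=0}^{n-1}\|Df|_{E^c(f^j(x))}\|\leq\lambda^n$ holds for \emph{every} $n>0$; that is, $x$ is already a Pliss hyperbolic time, so the averaged contraction is never violated at intermediate steps. The induction therefore closes directly once you take $\epsilon<\delta/K$, with $K$ depending only on the cone aperture and the uniform angle between $E^{ss}$ and $E^c$; no additional Pliss-type selection of times is required or relevant. In the paper's logic, Pliss's lemma (Lemma~2.1) is invoked \emph{before} this result precisely to manufacture points satisfying the ``for all $n$'' hypothesis; the present lemma then consumes that hypothesis rather than re-deriving it. So the circularity you flag is real in spirit but is broken simply by the strength of the assumption, not by a further combinatorial argument.
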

To prove Theorem A, we need to recall some $C^1$ generic statements.
For a partially hyperbolic homoclinic class $H(p,f)$ and
$\mu\in\mathcal{M}_e(f|_{H(p,f)})$, Let $\lambda^c_\mu$ be the
Lyapunov exponent of $\mu$ along the central direction. For a
hyperbolic periodic point $q\in H(p,f)$ of period $\pi(q)$,
$\lambda^c_q=\log|\nu|/\pi(q)$ where $\nu=Df^{\pi(q)}(q)|_{E^c(q)}$.
Put
$$\mathcal{LE}^c(H(p,f))=\overline{\{\lambda^c_q;\,q\in H(p,f)\cap Per(f)\}}$$
\begin{proposition}\label{generic1}
There is a residual subset $\mathcal{R}_0$ of $\textsl{Diff}^{\,1}(M)$
such that for any $f\in\mathcal{R}$ and any hyperbolic periodic
point $p$ of $f$,
\begin{itemize}
\item [(1)] $f$ is Kupka-Smale,
\item [(2)] the set $\mathcal{LE}^c(H(p,f))$ is connected (\cite{abcdw}),
\item [(3)] for any periodic point $q\in H(p,f)$ with
$\textrm{index}(q)=\textrm{index}(p)$, we have $q\sim p$,
(\cite{csy,fa}),
\item [(4)] for any ergodic measure $\mu$ of $f$, there is a sequence of
periodic point $p_n$ such that $\mu_{p_n}\to \mu$ in weak$^*$
topology and $\mathcal{O}(p_n)\to \textrm{Supp}(\mu)$ in Housdorff
metric (\cite{abc}).
\item [(5)] If a homoclinic class $H(p,f)$ is bi-Lyapunov stable
then for any sufficiently close $g$, $H(p_g,g)$ is bi-Lyapunov
stable, where $p_g$ is the continuation of $p$.
\end{itemize}
\end{proposition}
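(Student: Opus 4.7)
My plan is to establish each of the five properties (1)--(5) on its own residual subset $\mathcal{R}_i$ of $\textrm{Diff}^1(M)$ and then take $\mathcal{R}_0 = \bigcap_{i=1}^{5}\mathcal{R}_i$. Since $\textrm{Diff}^1(M)$ is a Baire space, a countable intersection of residual sets is again residual, so the proposition reduces to a separate argument for each item.

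For item (1) I would invoke the classical Kupka--Smale theorem, which already provides a residual set on which every periodic point is hyperbolic and every intersection of invariant manifolds is transverse. Items (2), (3) and (4) are essentially direct quotations from the literature: residuality of the connectedness of $\mathcal{LE}^c(H(p,f))$ is established in \cite{abcdw}; the fact that two hyperbolic periodic points in a common homoclinic class with equal index are homoclinically related on a residual set is proved in \cite{csy,fa}; and the weak-$*$ plus Hausdorff approximation of any ergodic measure by uniform measures on periodic orbits is the main theorem of \cite{abc}. For each I would simply name the residual subset provided by the reference.

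Item (5), persistence of bi-Lyapunov stability under $C^1$-perturbation, is the step requiring most care. My approach is to work on the residual subset where $g \mapsto H(p_g,g)$ is continuous at $f$ in the Hausdorff topology, a property that follows by combining upper semicontinuity of chain recurrence classes with the lower semicontinuity coming from the persistence of transverse homoclinic intersections. Given bi-Lyapunov stability of $H(p,f)$, pick neighborhoods $V \supset H(p,f)$ with $f(\overline{V})\subset V$ and $V' \supset H(p,f)$ with $f^{-1}(\overline{V'})\subset V'$. Because the inclusions $g(\overline{V})\subset V$ and $g^{-1}(\overline{V'})\subset V'$ are $C^0$-open conditions on $g$, and because at a continuity point of $g \mapsto H(p_g,g)$ the continuation $H(p_g,g)$ stays trapped inside $V \cap V'$, the same neighborhoods witness bi-Lyapunov stability of $H(p_g,g)$ for all $g$ sufficiently close to $f$.

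The main obstacle is item (5): bi-Lyapunov stability is a global property of forward and backward iterates of neighborhoods and is not preserved under perturbation in general. The key is the restriction to continuity points of the homoclinic-class map, which ensures that the trapping regions for $f$ remain trapping for nearby $g$ and that the continuation $H(p_g,g)$ is actually the invariant set they trap. The other four items, being direct quotations, amount to bookkeeping once the appropriate residuals have been named.
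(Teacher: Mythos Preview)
The paper itself offers no proof of this proposition: items (2)--(4) are simply attributed to \cite{abcdw}, \cite{csy,fa}, \cite{abc}, item (1) is the classical Kupka--Smale theorem, and item (5) is asserted with neither proof nor reference. Your treatment of (1)--(4) therefore matches the paper exactly, and the Baire-intersection wrapper is the obvious and correct way to assemble the pieces.

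Where you go beyond the paper is item (5), and here your sketch has two genuine gaps. First, Lyapunov stability of $H(p,f)$ does not by itself produce a trapping neighborhood $V$ with $f(\overline V)\subset V$; the definition only yields, for each neighborhood $U$, some $V$ with $f^n(V)\subset U$ for all $n\ge0$. Forming $W=\bigcup_{n\ge0}f^n(V)$ gives an open forward-invariant set, but not the strict inclusion $f(\overline W)\subset W$ that you need for $C^0$-robustness (think of the identity map, where every compact set is Lyapunov stable yet no strict trapping region exists). Second, even if trapping regions were available, persistence of a \emph{single} pair $(V,V')$ does not give bi-Lyapunov stability of $H(p_g,g)$: Lyapunov stability requires a whole basis of such regions, and each $V_n$ is robust only on its own $C^1$-neighborhood $\mathcal U_n$ of $f$, with $\bigcap_n\mathcal U_n$ possibly reducing to $\{f\}$. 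In addition, knowing $H(p_g,g)\subset V$ and $g(\overline V)\subset V$ only shows that $H(p_g,g)$ sits inside the Lyapunov-stable maximal invariant set of $g$ in $V$; it does not force $H(p_g,g)$ to equal that set or to be Lyapunov stable itself.

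A workable argument for (5) needs further generic input: generically homoclinic classes coincide with chain recurrence classes, a bi-Lyapunov stable chain class is isolated (simultaneously a quasi-attractor and quasi-repeller, hence an intersection of nested strict trapping regions in both time directions), and one then runs a semicontinuity/Baire argument on the filtration to obtain robustness. Since the paper does not supply this either, your attempt is not inferior to the paper's treatment, but the specific argument you wrote would not stand on its own.
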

Using arguments above we prove that any hyperbolic ergodic measure
supported on a $C^1$-generic partially hyperbolic homoclinic class
$H(p,f)$ can be approximated by periodic measures also supported on
the homoclinic class. In fact, this is a special case of Bonatti's
conjecture.
%
%\bigskip
%\hspace{-.4cm}\textbf{Bonatti's Conjecture.}~For a $C^1$-generic diffeomorphism
%$f$, any ergodic measure supported on a homoclinic class $H(p,f)$ of
%$f$ can be approximate in weak topology by periodic measures
%supported on the homoclinic class.
%\bigskip

Lemma \ref{center1}, below, gives a positive answer to the conjecture in the
case of hyperbolic measures on a partially hyperbolic homoclinic
class.
\begin{lemma}\label{center1}
For any $f\in\mathcal{R}_0$, any partially hyperbolic homoclinic
class $H(p,f)$, any hyperbolic measure
$\mu\in\mathcal{M}_e(f|_{H(p,f)})$ and any $\epsilon>0$, there is a
periodic point $q\in H(p,f)$ such that for any one dimensional
center bundle $E^c$,
\[|\lambda^c_p-\lambda^c_\mu|<\epsilon.\]
\end{lemma}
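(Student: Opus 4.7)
The plan is to combine Proposition 2.3(4) (weak-$*$ approximation of ergodic by periodic measures) with the stable-manifold control supplied by Lemmas 2.1 and 2.2.

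First I would apply statement (4) to $\mu$, obtaining periodic points $p_n$ with $\mu_{p_n}\to\mu$ in the weak-$*$ topology and $\mathcal{O}(p_n)\to\mathrm{Supp}(\mu)$ in Hausdorff metric. Since $E^c$ is continuous and one-dimensional on the maximal invariant set $\tilde{\Lambda}$ in a neighborhood of $H(p,f)$, the function $x\mapsto\log\|Df|_{E^c(x)}\|$ is continuous there, and weak-$*$ convergence gives
\[
\lambda^c_{p_n}=\int \log\|Df|_{E^c}\|\,d\mu_{p_n}\;\longrightarrow\;\int \log\|Df|_{E^c}\|\,d\mu=\lambda^c_\mu .
\]
Thus $|\lambda^c_{p_n}-\lambda^c_\mu|<\epsilon$ for all large $n$, and the remaining task is to verify $p_n\in H(p,f)$.

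Without loss of generality assume $\lambda^c_\mu<0$. For $n$ large, $\lambda^c_{p_n}$ is uniformly negative, so $p_n$ is a saddle whose stable bundle is $E^{ss}\oplus E^c$ and whose unstable bundle is $E^{uu}$. Applying Pliss's Lemma along $\mathcal{O}(p_n)$ with parameters $\lambda_0$ slightly above $\lambda^c_\mu$ and some $\lambda_1\in(\lambda_0,0)$ produces Pliss times, and Lemma 2.2 then promotes the corresponding center-stable plaques at $p_n$ to genuine local stable manifolds of size $\epsilon_0>0$ \emph{uniform} in $n$; the unstable side comes for free from the uniform expansion on $E^{uu}$. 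Because $\mathcal{O}(p_n)$ lies within Hausdorff distance $\eta_n\to 0$ of $H(p,f)$, and transverse homoclinic orbits of $p$ are dense in $H(p,f)$, one can locate such an orbit $\eta_n$-close to $p_n$. Combining the uniform-size invariant manifolds of $p_n$ with the inclination lemma applied to that transverse homoclinic orbit yields transverse intersections $W^s(\mathcal{O}(p_n))\pitchfork W^u(\mathcal{O}(p))$ and $W^u(\mathcal{O}(p_n))\pitchfork W^s(\mathcal{O}(p))$, so $p_n\sim p$ and hence $p_n\in H(p,f)$.

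The main obstacle is the argument in the previous paragraph: one must produce the invariant manifolds of $p_n$ with size \emph{uniform} in $n$ and then thread them through the dense transverse-homoclinic web of $p$ to obtain genuine heteroclinic intersections. Partial hyperbolicity takes care of the $E^{ss}$ and $E^{uu}$ contributions automatically, while the center direction is exactly what Pliss's Lemma together with Lemma 2.2 is designed to handle. One should also note that the argument covers $\lambda^c_\mu>0$ by time reversal; in the sign-mismatched case $\operatorname{sign}(\lambda^c_\mu)\neq\operatorname{sign}(\lambda^c_p)$ the same scheme produces heterodimensional heteroclinic intersections and still places $p_n$ in $H(p,f)$, although statement (3) of Proposition 2.3 is no longer directly available to upgrade proximity to a homoclinic relation.
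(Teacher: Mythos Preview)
Your approach is essentially the same as the paper's: invoke Proposition~2.3(4) to obtain periodic orbits $p_n$ with $\lambda^c_{p_n}\to\lambda^c_\mu$, then use Pliss's Lemma together with Lemma~2.2 to produce uniform-size stable manifolds along a suitable iterate $q_n\in\mathcal{O}(p_n)$, and finally link $p_n$ to $H(p,f)$ via those manifolds. You are in fact more explicit than the paper in justifying $\lambda^c_{p_n}\to\lambda^c_\mu$ and in spelling out the inclination-lemma step (the paper simply asserts ``$p_n\sim p_m$ and $p_n\in H(p,f)$''), and your caveat about the index-mismatch case correctly flags a point the paper leaves implicit.
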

\begin{proof}
Choose $\lambda_0=\lambda^c_\mu+\epsilon<\lambda_1<0$. By
Proposition \ref{generic1}, for any $\mu\in\mathcal{M}_e(f|_{H(p,f)})$, there
is a sequence $\{p_n\}\subset \tilde{\Lambda}$ such that
$\lambda^c_{p_n}\to\lambda^c_\mu$. Hence, for sufficiently large
$n$, $\lambda^c_{p_n}<\lambda_0=\lambda^c_\mu+\epsilon<0$ and so
$\prod_{j=0}^{\pi(p_n)}|Df|_{E^c(f^j(p_n))}|\leq
(\exp^{\lambda_0})^{\pi(p_n)}.$ Now, by Pliss Lemma, for large $n$,
there is a point $q_n\in\mathcal{O}(p_n)$ such that
$\prod_{j=0}^{t}|Df|_{E^c(f^j(q_n))}|\leq (\exp^{\lambda_1})^t$ for
any $t\leq \pi(p_n)$. By Lemma \ref{stable1}, the center-stable manifolds of
$q_n$'\,s are in fact stable manifolds and have uniform size. This
implies that for large $n,m\in\mathbb{N}$, $p_n\sim p_m$ and $p_n\in
H(p,f)$.
\end{proof}
In what follows, we first state a lemma which is a modification of
some recent results on $C^1$-generic dynamics in our context
(\cite{csy}). The second part of the Lemma helps us to approximate ergodic
measures, not necessarily hyperbolic, supported on bi-Lyapunov
stable homoclinic class by periodic ones, Theorem \ref{generic2} below. We recall that a
periodic point $p$ of a diffeomorphism $f$ has {\it weak Lyapunov
exponent} along a center bundle $E^c$ if for some small $\delta>0$,
$|\lambda^c|<\delta.$.
\begin{lemma}\label{generic3}
There is a residual subset $\mathcal{R}_1$ of $\textsl{Diff}^{\,1}(M)$
such that for any $f\in\mathcal{R}_1$ and any homoclinic class
$H(p,f)$ of $f$ having a partially hyperbolic splitting $E^s\oplus
E^c\oplus E^u$ with non-hyperbolic center bundle $E^c$, we have
\begin{itemize}
\item if $E^c$ is one-dimensional then there is a periodic
point in the homoclinic class with weak Lyapunov exponent along
$E^c$,
\item if $E^c$ has a dominated splitting $E^c=E^c_1\oplus E^c_2$ into
one dimensional subbundles then either $H(p,f)$ contains a periodic
point whose index is equal to $\textrm{dim}(E^s)$ or it contains a
periodic point with weak Lyapunov exponent along $E^c_1$.
\end{itemize}
\end{lemma}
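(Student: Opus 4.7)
The overall strategy follows the template of Lemma 2.4. The input is the failure of uniform hyperbolicity on a one-dimensional (sub)bundle, which by a standard variational argument produces an ergodic invariant measure in $H(p,f)$ with the appropriate sign of central Lyapunov exponent. Proposition 2.3(4) approximates such a measure weakly by periodic measures whose supports Hausdorff-converge into $H(p,f)$. The Pliss mechanism of Lemma 2.1 combined with Lemma 2.2 then supplies uniform-size center-stable manifolds along the approximating orbits, forcing them to be homoclinically related to $p$ (using Proposition 2.3(3)) and hence to lie inside $H(p,f)$.

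For Part (1), the assumption that the one-dimensional $E^c$ is non-hyperbolic means $E^c$ is neither uniformly contracted nor uniformly expanded over $H(p,f)$. The first failure produces an ergodic measure $\mu^+\in\mathcal{M}_e(f|_{H(p,f)})$ with $\lambda^c_{\mu^+}\ge 0$, the second an ergodic $\mu^-$ with $\lambda^c_{\mu^-}\le 0$. Applying the machinery above to each yields periodic orbits $p_n^\pm\in H(p,f)$ with $\lambda^c_{p_n^\pm}\to\lambda^c_{\mu^\pm}$. If either limit equals $0$ the approximating orbits themselves give a periodic point in $H(p,f)$ with arbitrarily small $|\lambda^c|$. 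Otherwise $H(p,f)$ carries periodic points of strictly positive and strictly negative central exponent, and the connectedness of $\mathcal{LE}^c(H(p,f))$ from Proposition 2.3(2) forces the existence of periodic exponents arbitrarily close to $0$, furnishing the weak-exponent point.

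For Part (2), the domination of $E^c_1$ by $E^c_2$ forces $\lambda_1\le\lambda_2$ for every invariant measure. If $E^c_1$ is not uniformly contracting, the same scheme produces an ergodic $\mu$ with $\lambda_1^\mu\ge 0$ and approximating periodic orbits $q_n\in H(p,f)$ with $\lambda_1^{q_n}\to\lambda_1^\mu$. Either some $q_n$ satisfies $\lambda_1^{q_n}>0$, in which case domination gives $\lambda_2^{q_n}>0$ and $\textrm{index}(q_n)=\textrm{dim}(E^s)$---the first alternative---or $\lambda_1^{q_n}\le 0$ for all large $n$, so $\lambda_1^{q_n}\to 0^-$ gives the weak-exponent orbit required by the second alternative. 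The remaining sub-case, in which $E^c_1$ is uniformly contracted while $E^c_2$ fails uniform expansion, is handled by applying Part (1) to the refined partially hyperbolic splitting $(E^s\oplus E^c_1)\oplus E^c_2\oplus E^u$, whose one-dimensional center $E^c_2$ is then non-hyperbolic; the resulting weak-$\lambda_2$ periodic orbit, combined with the dominated structure, yields the first alternative.

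The principal obstacle is controlling which homoclinic class the approximating periodic orbits belong to. The ergodic measures produced by the failure of uniform hyperbolicity need not be hyperbolic, so Lemma 2.4 cannot be invoked as a black box; instead its Pliss + Lemma 2.2 step must be replayed in the present setting to obtain uniform-size local stable manifolds along the approximating orbits and thereby homoclinic relations with $p$. A secondary, more delicate obstacle is the sub-case of Part (2) in which $E^c_1$ is uniformly contracted: producing a genuine index-$\textrm{dim}(E^s)$ orbit inside $H(p,f)$ is the step where the full $C^1$-generic structure encoded in $\mathcal{R}_1$ is used most heavily, and where the refinement of the splitting together with Proposition 2.3(3) is essential.
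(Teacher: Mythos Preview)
The paper does not actually prove this lemma; it is stated as a modification of results of Crovisier--Sambarino--Yang \cite{csy}, whose arguments rely on Liao's selecting lemma and the central-model machinery for chain-recurrence classes rather than on the Pliss/Lemma~2.2 scheme you propose.

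Your plan has a genuine gap at exactly the step you flag as the ``principal obstacle.'' The combination Pliss~$+$~Lemma~2.2 yields a center-stable manifold that is a true stable manifold of \emph{uniform} size only when the central exponent is bounded away from zero: Lemma~2.2 requires a fixed $0<\lambda<1$, and the size $\epsilon$ it returns degenerates as $\lambda\to 1$. The ergodic measures $\mu^{\pm}$ you extract from the failure of uniform contraction/expansion of $E^c$ satisfy only $\lambda^c_{\mu^+}\ge 0$ and $\lambda^c_{\mu^-}\le 0$; nothing excludes equality. When, say, $\lambda^c_{\mu^+}=0$, the approximating periodic orbits $p_n$ from Proposition~2.3(4) have $\lambda^c_{p_n}\to 0$, and ``replaying'' Pliss~$+$~Lemma~2.2 gives stable manifolds whose size tends to~$0$ with~$n$; you then cannot conclude that $p_n$ is homoclinically related to $p$, hence cannot place $p_n$ inside $H(p,f)$. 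This zero-exponent case is precisely what Liao's selecting lemma is built to handle: it produces, directly from the non-uniformity of the bundle over the class, periodic orbits in the class with arbitrarily weak central exponent, without routing through a possibly non-hyperbolic ergodic measure and an uncontrolled Pliss step.

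There is also a concrete error in your last sub-case of Part~(2). If $E^c_1$ is uniformly contracted, every periodic orbit $q\in H(p,f)$ has $\lambda_1^q$ bounded away from~$0$ from below, so the second alternative (weak exponent along $E^c_1$) is impossible and you must produce a point of index~$\dim(E^s)$. But the weak-$\lambda_2$ orbit obtained by applying Part~(1) to $(E^s\oplus E^c_1)\oplus E^c_2\oplus E^u$ still has $\lambda_1<0$, hence index $\dim(E^s)+1$ or $\dim(E^s)+2$; it does not ``yield the first alternative.''
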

\begin{theorem}\label{generic2}For $C^1$-generic bi-Lyapunov stable homoclinic class
$H(p,f)$, satisfying Proposition 2.3 and Lemma 2.5, any ergodic
measure can be approximate by periodic measure inside the homoclinic
class.
\end{theorem}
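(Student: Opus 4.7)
The plan is to split on whether the ergodic measure $\mu\in\mathcal{M}_e(f|_{H(p,f)})$ is hyperbolic. If $\mu$ is hyperbolic, the argument behind Lemma 2.4 already applies essentially verbatim: from Proposition 2.3(4) one obtains a sequence of periodic points $p_n$ with $\mu_{p_n}\to\mu$ in weak$^*$ topology and $\mathcal{O}(p_n)\to\mathrm{Supp}(\mu)$ in Hausdorff metric, and Pliss's Lemma 2.1 together with Lemma 2.2 forces $p_n\in H(p,f)$ for large $n$. The remaining content therefore concerns non-hyperbolic $\mu$, i.e.\ those with at least one vanishing central Lyapunov exponent.

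For such $\mu$ I would again start by invoking Proposition 2.3(4) to produce periodic points $p_n$ with $\mu_{p_n}\to\mu$ and $\mathcal{O}(p_n)\to\mathrm{Supp}(\mu)\subset H(p,f)$ in Hausdorff metric. A priori these $p_n$ need not lie in $H(p,f)$. Using bi-Lyapunov stability, and Proposition 2.3(5) to preserve it under small perturbations, fix a neighborhood $U$ of $H(p,f)$ and a smaller $V\subset U$ with $f^n(V)\cup f^{-n}(V)\subset U$ for every $n\geq 0$; then for $n$ large the orbit $\mathcal{O}(p_n)$ lies in $V$ and hence inside the maximal invariant set of $U$, which carries the extended partially hyperbolic splitting $E^s\oplus E^c\oplus E^u$. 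Next, apply Lemma 2.5 to select a periodic point $q\in H(p,f)$ whose central Lyapunov type matches that of $p_n$---either $\mathrm{index}(q)=\dim(E^s)$ or $q$ has a weak exponent along $E^c_1$---so that $q\sim p$ by Proposition 2.3(3). To finish it remains to show $p_n\sim q$, for this immediately places $p_n$ in $H(q,f)=H(p,f)$. The plan is to apply Pliss's Lemma 2.1 along both orbits to extract points where the center-stable manifolds of $p_n$ and of $q$ have uniform size (exactly as in the proof of Lemma 2.4), and to couple this with the symmetric strong-unstable control supplied by bi-Lyapunov stability; since orbits starting in $V$ remain in $U$ under both forward and backward iteration, the local invariant manifolds must accumulate on one another, which, by Kupka--Smale (Proposition 2.3(1)), produces transverse intersections between $W^u(\mathcal{O}(q))$ and $W^s(\mathcal{O}(p_n))$, and vice versa.

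The hard part will be the index-matching step: when the zero exponent of $\mu$ lies along $E^c_1$, the periodic point $p_n$ may a priori have index $\dim(E^s)$ or $\dim(E^s)+1$, whereas Lemma 2.5 in its stated form supplies only one of the two alternatives. Bridging this gap is likely to require either a $C^1$-perturbation argument that selects the sign of the weak central exponent of $p_n$ before passing to the limit, or a strengthening of Lemma 2.5 that produces periodic points of both indices simultaneously inside $H(p,f)$; without such a strengthening, the coupling argument in the previous paragraph only pairs $p_n$ with a $q$ of the wrong index, and the desired heteroclinic intersections cannot be forced by dimension count alone.
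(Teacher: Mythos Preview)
Your overall skeleton matches the paper's: split on whether $\mu$ is hyperbolic, dispose of the hyperbolic case via the argument of Lemma~2.4, and for non-hyperbolic $\mu$ invoke Proposition~2.3(4) to produce approximating periodic points $p_n$ whose membership in $H(p,f)$ must then be established. You have also correctly located the obstruction: the index-matching step is the crux, and your plan as written does not close it.

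The paper resolves this gap by a route different from either of your two suggested fixes, and it sidesteps the matching problem entirely. Instead of seeking a $q$ whose index matches that of $p_n$, the paper proves that $H(p,f)$ \emph{must} contain a periodic point of the minimal possible index $s=\dim(E^s)$. This is a contradiction argument: assume every periodic point in $H(p,f)$ has index $s+1$; then Lemma~2.5 supplies a $q\sim p$ with a weak exponent along $E^c_1$, and Gourmelon's perturbation lemma (which perturbs the derivative while controlling the invariant manifolds) converts $q$ into a nearby periodic point $q'$ of index $s$ with $W^s(p)\cap W^u(q')\neq\emptyset$. Bi-Lyapunov stability (robust by Proposition~2.3(5)) then forces $q'\in H(p,f)$, contradicting the assumption. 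So the perturbation is applied to the weak point in $H(p,f)$, not to the $p_n$.

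Once a point $q\in H(p,f)$ of index $s$ is in hand, the capture of $p_n$ is much simpler than your Pliss-based scheme. Since $\mathrm{index}(q)=s$, the unstable manifold $W^u(q)$ is tangent to the full center-unstable bundle $E^c\oplus E^u$; by partial hyperbolicity it transversally meets $W^s(f^{m_n}(p_n))$ for large $n$ and suitable $m_n$, \emph{whatever} the index of $p_n$ may be. Lyapunov stability then yields $p_n\in H(p,f)$. No Pliss iterates, uniform-size center-stable manifolds, or symmetric coupling are needed at this stage.
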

\begin{proof}
As before, if an ergodic measure is hyperbolic then nothing remains.
Hence, suppose that $\mu$ is a non-hyperbolic ergodic measure
supported on the homoclinic class $H(p,f)$. Using the forth part of
Proposition \ref{generic1}, one can find a sequence $\{p_n\}$ of periodic point
such that $\mathcal{O}(p_n)\to$ Supp$(\mu)$ in Housdorff metric and
also the periodic measures associated with $p_n$ tends toward $\mu$ in
weak topology. Now, if the homoclinic class contains a periodic
point $q$ of index $s=\textrm{dim}(E^s)$ then by the partial
hyperbolicity, for large $n$, and positive iteretes $m_n$,
$W^u(q)\cap W^s(f^{m_n}(p_n))\neq\emptyset$. Hence, by the Lyapunov
stability, $p_n\in H(p,f)$. Thus, suppose that for any periodic
point in the homoclinic class its index equals to $s+1$.

By Lemma \ref{generic3}, there is a periodic point $q$ in the homoclinic class
with weak Lyapunov exponents along the central direction $E^c_1$. We
note that by the assumption
$\textrm{index}(q)=\textrm{index}(p)=s+1$ and so $q\sim p$. Now,
using Gourmelon's perturbation lemma \cite{ga}, which allows us to
perturb the derivative controlling the invariant manifolds, we can
produce a periodic point $q^\prime$ of index $s$ such that
$W^s(p)\cap W^u(q^\prime)\neq\emptyset$. Again, by the Lyapunov
stability, $q^\prime\in H(p,f)$. This is contradiction.
\end{proof}
\subsection{Remarks in the case $E^c=E^c_1\oplus E^c_2$}
In general, we don't know much about the topological properties of
the set of ergodic measures supported on a homoclinic class. Since
the central directions are one dimensional the Lyapunov exponents
along them define a continues map with in weak-topology. As before,
one can deduce that the closure of the set of central Lyapunov
exponents associated with the hyperbolic ergodic measures forms a
convex set. One should notice that by the domination, the Lyapunov
exponent along $E^c_1$ is strictly less than the Lyapunov exponents
along $E^c_2$. Hence, in the case of non-hyperbolic ergodic measures
two disjoint cases may be occurred, either
$\lambda^\mu_1<0=\lambda^\mu_2$ or $\lambda^\mu_1=0<\lambda^\mu_2$.
The overall picture of the set of central Lyapunov exponents is
illustrated as the general position in Figure \ref{Fig2}, left.
\begin{figure}[h]
\begin{center}
\includegraphics[scale=.72]{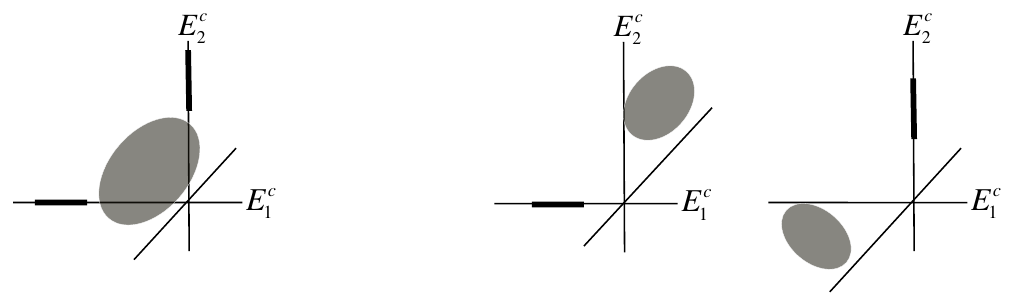}
\end{center}
%\hspace{1cm}
\caption{Left: General Position, Right: Special Position}
\label{Fig2}
\end{figure}
The shadowed part is the closure of the set of central Lyapunov
exponents associated with the hyperbolic ergodic measures supported on
$H(p,f)$ which is in fact the set $\mathcal{LE}^c(H(p,f))$. Using
the central model for chain transitive sets, S. Crovisier {\it et
al.} have obtained the following version of the Mane's ergodic
closing lemma inside the homoclinic class (\cite{csy}).
\begin{proposition}
For any $C^1$-generic diffeomorphism $f$, let $H(p,f)$ be a
partially hyperbolic homoclinic class with one dimensional central
bundles. Let $i$ be the minimal stable dimension of its periodic
orbits. If $H(p,f)$ contains periodic point with $i^{th}$ weak
Lyapunov exponent, then for any ergodic measure supported on
$H(p,f)$ whose $(i-1)^{th}$ Lyapunov exponent is zero, there exists
periodic orbits contained in $H(p,f)$ whose associated measures
converge for the weak topology towards the measure.
\end{proposition}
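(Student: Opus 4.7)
The plan is to follow the same strategy exhibited in the proof of Theorem A: first produce a candidate sequence of approximating periodic orbits via the generic ergodic closing lemma, and then use Pliss--type arguments together with the central model of Crovisier to guarantee that these orbits actually lie inside $H(p,f)$ and realise the desired index. Let $\mu$ be an ergodic measure on $H(p,f)$ whose $(i-1)^{\text{th}}$ Lyapunov exponent vanishes; label the two one--dimensional central bundles so that this exponent is the one along $E^c_1$. Let $q\in H(p,f)$ be the periodic point supplied by hypothesis, so that its $i^{\text{th}}$ Lyapunov exponent has small absolute value. Item~(4) of Proposition~2.3 furnishes a sequence of periodic orbits $X_n$ with $\mu_{X_n}\to\mu$ in the weak-$*$ topology and $\mathcal{O}(X_n)\to\mathrm{Supp}(\mu)$ in Hausdorff distance; continuity of central Lyapunov exponents along converging periodic orbits in a partially hyperbolic region then forces $\lambda^{c_1}_{X_n}\to 0$ and $\lambda^{c_2}_{X_n}\to\lambda^{c_2}_\mu$. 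It remains to show $X_n\in H(p,f)$ for large $n$.

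To do this I would apply a central model along $E^c_1$ to the compact invariant set $\mathrm{Supp}(\mu)$: because the $E^c_1$-exponent is zero, Pliss's Lemma~2.1 yields a positive density of iterates of $X_n$ at which the forward product $\prod_{j=0}^{t}\|Df|_{E^c_1(f^j(\cdot))}\|$ is bounded above by $e^{-\delta t}$ for some small fixed $\delta>0$, and symmetrically for $f^{-1}$ a positive density with bounded backward product. By Lemma~2.2 these Pliss times carry strong-stable (resp.\ strong-unstable) manifolds of uniform size, coming from the splitting $E^s\oplus E^c_1$ (resp.\ $E^c_2\oplus E^u$). Since the hypothesis provides $q\in H(p,f)$ with $i^{\text{th}}$ weak Lyapunov exponent, Gourmelon's perturbation lemma may be applied at $q$ to slightly rotate the weak direction and create, through a $C^1$-small perturbation, transverse intersections between the large-scale $W^u(q)$ and the small-scale $W^s$ manifolds produced above on $X_n$, and symmetrically in the reverse direction. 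A standard $C^1$-genericity argument, using item~(3) of Proposition~2.3 to relate $q$ to $p$, removes the perturbation and the intersections persist, giving $X_n\sim q$ and hence $X_n\subset H(p,f)$.

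The main obstacle is the last step: the orbits $X_n$ may a priori oscillate in index between $i-1$ and $i$, because $\mu$ has a vanishing exponent, and there is no reason a priori for their invariant manifolds to have uniform size in the $E^c_1$ direction. The central-model analysis of \cite{csy} is exactly what is needed to handle this degeneracy, since it converts the zero-exponent hypothesis into a usable geometric object along $\mathrm{Supp}(\mu)$, and Gourmelon's lemma is what allows one to bridge the index mismatch with $q$ without destroying the uniform estimates obtained from Lemma~2.2. Once these two tools are combined, Lemma~2.6 can be invoked to verify that the resulting limiting measure is ergodic with the prescribed support, but the heart of the argument is the index reconciliation carried out inside the central model.
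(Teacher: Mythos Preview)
The paper does not supply its own proof of this proposition: it is quoted verbatim from Crovisier--Sambarino--Yang \cite{csy} and is presented as an input, not as something proved here. So there is no in-paper argument to compare your proposal against, and your outline must stand on its own.

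It does not. There are three genuine gaps.

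\emph{First, the Pliss step does not apply.} You assert that because $\lambda^{c_1}_\mu=0$, Pliss's lemma produces iterates along $X_n$ at which $\prod_{j=0}^{t}\|Df|_{E^c_1}\|\le e^{-\delta t}$ for some fixed $\delta>0$. But Pliss needs the average to lie strictly below the target rate; a zero exponent gives no such margin, and the approximating orbits $X_n$ may have $\lambda^{c_1}_{X_n}$ of either sign and arbitrarily close to $0$. You therefore obtain no uniform-size stable manifolds in the $E^s\oplus E^c_1$ direction, and Lemma~2.2 cannot be invoked.

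\emph{Second, the Gourmelon step is circular.} Gourmelon's lemma perturbs the diffeomorphism to change the index of $q$ while controlling one branch of its invariant manifold. You then write that ``a standard $C^1$-genericity argument \dots\ removes the perturbation and the intersections persist''. This is not how generic arguments work: the intersection you manufactured exists only for the perturbed map, and there is no open or residual condition that transports it back to $f$. In the paper's own use of Gourmelon (Theorem~2.6) the conclusion is a contradiction with an assumed index restriction, not a statement about $f$ itself.

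\emph{Third, invoking the central model of \cite{csy} is exactly what is at issue.} The proposition you are asked to prove \emph{is} the output of the central-model analysis in \cite{csy}; saying that ``the central-model analysis of \cite{csy} is exactly what is needed to handle this degeneracy'' amounts to assuming the result. Your final sentence, appealing to Lemma~2.8 (you write 2.6) to check ergodicity of a limit measure, is also misplaced: the measure $\mu$ is given and ergodic by hypothesis, and the task is only to place the approximating periodic orbits inside $H(p,f)$.
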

The lemma suggests to special situations showed in the Figure \ref{Fig2}, right.
\section{Generating a Measure with Full Support}
Now, we follow the approach suggested in \cite{gikn} to provide
sufficient conditions for the existence of desired ergodic invariant
measure as a limit of periodic ones.
\begin{definition}
A periodic orbit $X$ is a $(\gamma,\chi)$-good approximation of a
periodic orbit $Y$ if the following holds
\begin{itemize}
\item for a subset $\Gamma$ of $X$ and a projection $\rho:\Gamma\to
Y$
\[d(f^j(x),f^j(\rho(x)))<\gamma,\]
for any $x\in \Gamma$ and any $j=0,1,\ldots,\pi(Y)-1$;
\item $card(\Gamma)/card(X)\geq \chi$;
\item $card(\rho^{-1}(y)$ is the same for all $y\in Y$.
\end{itemize}
\end{definition}
The next lemma is a key point in the proof of the ergodicity of a
limit measure. This lemma was suggested by Yu. Ilyashenko
\cite{gikn}. The following modified version, borrowed from
\cite{bdg}, determines the support of the obtained ergodic measure.
\begin{lemma}\label{support}
Let $\{X_n\}$ be a sequence of periodic orbits with increasing
periods $\pi(X_n)$ of $f$. Assume that there are two sequences of
numbers $\{\gamma_n\}$, $\gamma_n>0$ and $\{\chi_n\}$, $\chi_n\in
(0,1]$, such that
\begin{itemize}
\item [1.] for any $n\in\mathbb{N}$ the orbit $X_{n+1}$ is a
$(\gamma_n,\chi_n)$-good approximation of $X_n$;
\item [2.] $\sum _{n=1}^\infty \gamma_n<\infty$;
\item [3.]$\prod_{n=1}^\infty \chi_n\in (0,1]$.
\end{itemize}
Then the sequence $\{\mu_{X_n}\}$ of atomic measures has a limit
$\mu$ which is ergodic and
\begin{equation}\label{support1}
\textrm{Supp}(\mu)=\bigcap_{k=1}^\infty\big(\overline{\bigcup_{l=k}^\infty
X_l}\big)
\end{equation}
\end{lemma}
\begin{lemma}(lemma 3.5 in \cite{bdg})\label{app}
There is a residual subset $\mathcal{R}_2$ of $\textsl{Diff}^{\,1}(M)$
such that for any $f\in\mathcal{R}_2$, any partially hyperbolic
homoclinic class $H(p,f)$, any one dimensional center bundle $E^c$,
any saddle $q$ in $H(p,f)$ and any $\epsilon>0$, $H(p,f)$ contains a
saddle $q_1$ which is homoclinically related to $q$, its orbit is
$\epsilon$-dense in the homoclinic class $H(p,f)$ and
\[|\lambda^c_q-\lambda^c_{q_1}|<\epsilon.\]
\end{lemma}
\subsection{Full Support, First Part of Theorem A}
We first proceed with the first case of the theorem. Let $f\in
\mathcal{R}_0\cap\mathcal{R}_1\cap\mathcal{R}_2$ and $H(p,f)$ be a
partially hyperbolic homoclinic class of $f$ with one dimensional
center direction. Put
\[\lambda^c_{min}=\min
\lambda^c(\mu),\,\,\lambda^c_{max}=\max \lambda^c(\mu),\] where
``$\min$" and ``$\max$" are given over all $f$-invariant ergodic
measures supported on $H(P,f)$. We consider three possible cases.

%\begin{itemize}
%\item [(i)]
At the fist consider the case in which $\lambda^c_{min}<0<\lambda^c_{max}$. Let
$\alpha\in(\lambda^c_{min},\lambda^c_{max})$. We use Lemma \ref{support} to
construct an ergodic measure $\mu$ with Supp$(\mu)=H(p,f)$ such that
$\lambda^c_\mu=s$. By Lemma \ref{app}, there are two saddles $p$ and $q_0$
such that $\lambda^c_p<0<\alpha<\lambda^c_{q_0}$. Two cases may be
occurred.
%\begin{itemize}

%\item
{\bf Case 1.}~$\alpha=0$. This case is deduced from \cite{bdg}. In fact, the authors in
\cite{bdg} have proved that if a $C^1$-generic homoclinic class
which has partial splitting with one dimensional center direction
has periodic points with different index then it can admit a
non-hyperbolic ergodic measure whose support equals to the whole of
the homoclinic class.
%\item

{\bf Case 2.}~$\alpha\neq 0$.  Suppose that $\alpha>0$. By the second item of
Proposition \ref{generic1}, there is a periodic point $p_0\in H(p,f)$ such that
$0<\lambda^c_{p_0}<\alpha<\lambda^c_{q_0}$. Inductively, we find two
sequences of $\{p_n\}$ and $\{q_n\}$ of the same index such that
\begin{itemize}
\item [(a)]~$0<\lambda^c_{p_{n-1}}<\lambda^c_{p_n}<\big(s+\lambda^c_{p_{n-1}}\big)/2
<\alpha<\lambda^c_{q_n}<(s+\lambda^c_{q_{n-1}})/2<\lambda^c_{q_{n-1}}$,
\item [(b)]~$q_n$ is $\epsilon/2^n$-dense in $H(p,f)$,
\item [(c)]~$q_n$ is $(\epsilon/2^n,(1-c/2^n))$-good
approximation of $q_{n-1}$, for some positive constant $c$.
\end{itemize}
Let $0<\lambda^c_{p_{n-1}}<\alpha<\lambda^c_{q_{n-1}}$. By Lemma \ref{app},
there is a saddle $p_n\in H(p,f)$ with $\epsilon/2^n$-dense orbit
such that
$0<\lambda^c_{p_{n-1}}<\lambda^c_{p_n}<\big(\alpha+\lambda^c_{p_{n-1}}\big)/2<\alpha$.
Since $p_n$ and $q_{n-1}$ have the same index they are
homoclinically related. Let
$x\in
W^s(\mathcal{O}(q_{n-1}))\,\overline{\pitchfork}\,W^u(\mathcal{O}(p_{n}))$ and $y\in
W^u(\mathcal{O}(q_{n-1}))\,\overline{\pitchfork}\,W^s(\mathcal{O}(p_{n}))$.
Consider a locally maximal hyperbolic set $\Lambda_n$ containing
$\mathcal{O}(p_n)\cup
\mathcal{O}(x)\cup\mathcal{O}(y)\cup\mathcal{O}(q_{n-1})$. For some
suitable $m_n\in\mathbb{N}$, put
\[\mathcal{PO}_n=\{\underbrace{\mathcal{O}(p_n)}_{t_n-\textrm{times}},f^{-m_n}(x),\ldots,f^{m_n}(x),
\underbrace{\mathcal{O}(q_{n-1})}_{s_n-\textrm{times}},f^{-m_n}(y),\ldots,f^{m_n}(y)\},\]
where $2m_nM/\big(t_n\pi(p_n)\lambda^c_{p_n}+s_n
\pi(q_{n-1})\lambda^c_{q_{n-1}}+2mM\big)<\epsilon/2^n$. If
$\mathcal{PO}_n$ is $\epsilon/2^n$-shadowed by a saddle $q_n$ then
$\pi(q_n)=t_n\pi(p_n)+s_n\pi(q_{n-1})+2m_n$ and for suitable $t_n$
and $s_n$ one has
\[\alpha<\lambda^c_{q_n}\thickapprox\frac{t_n\pi(p_n)\lambda^c_{p_n}+s_n
\pi(q_{n-1})\lambda^c_{q_{n-1}}+2m_nM}
{t_n\pi(p_n)+s_n\pi(q_{n-1})+2m_n}<\frac{\alpha+\lambda^c_{q_{n-1}}}{2},\]
where $M=\max_{x\in M}\log\|Df(x)\|$. It is't difficult to see that
%\begin{align*}
\[\chi_n=\frac{s_n\pi(p_n)}{t_n\pi(q_{n-1})+s_n\pi(p_n)+2m}=
1-\frac{t_n\pi(q_{n-1})+2m}{t_n\pi(q_{n-1})+s_m\pi(p_n)+2m}>1-c/2^n,\]
%\end{align*}
for some positive constant $c$. Now, the sequence $\{q_n\}$
satisfies in the assumption of Lemma \ref{support}. Hence, the weak$^*$ limit
$\mu$ of the atomic measures $\mu_{q_n}$ is ergodic and
\[\lambda^c_\mu=\int |Df|_{E^c}|\,d\mu=\lim_{n\to\infty} \int
|Df|_{E^c}|\,d\mu_{q_n}=\lim_{n\to\infty}\lambda^c_{q_n}=\alpha.\]
Furthermore, by (\ref{support1}), $\textrm{Supp}(\mu)=H(p,f)$.
%\end{itemize}
%\item [(ii)]

Now, we are dealing with the remaining cases. First Consider the case $\lambda^c_{min}>0$ or $\lambda^c_{max}<0$. In this case, the
central direction is hyperbolic (\cite{cao}). Another cases that may be happened are $\lambda^c_{min}=0$ or $\lambda^c_{max}=0$. In these cases, by the first part of Lemma \ref{generic3}, the problem is reduced to the previous
ones.
%\end{itemize}
%In the second case of the theorem, one should notice that by Theorem
%2.6, any ergodic measure can be  approximated by periodic measures
%supported on the homoclinic class. By the second part of Proposition
%2.3, we can suppose that these sequence of periodic orbits are
%coindex one. Hence, the lemma turns to the first case.
%\end{proof}
\section{Generating a Measure with Positive Entropy}
As it is seen in the previous section the crucial is the existence of periodic points of different indices. Here, we only deal with this case. This section is essentially owed to \cite{bbd}. We start by stating some preliminarily notions and definitions.
\subsection{Flip-flop Family}
Let $(X,d)$ be a metric space, $f:X\to X$ be a continuous map, $K$ a compact subset of $X$, $\phi:K\to \mathbb{R}$ a continuous
function and $\alpha$ a real number. A flip-flop family with respect to couple $(\phi,\alpha)$ %associated with $p$ and $q$
is a family $\mathfrak{F}_{(\phi,\alpha)}$ of compact subsets of $K$ with bounded diameter such that $\mathfrak{F}_{(\phi,\alpha)}=\mathfrak{F}^-_{(\phi,\alpha)}\bigcup \mathfrak{F}^+_{(\phi,\alpha)}$ into disjoint families such that
\begin{itemize}
\item [(1)] There exists a constant $\chi_0>0$ %with $\lambda^c(p)+\chi_0<\alpha<\lambda^c(q)-\chi$
such that for any $D^+\in\mathfrak{F}^+_{(\phi,\alpha)}$ and $D^-\in\mathfrak{F}^-_{(\phi,\alpha)}$, $$\phi|_{D^-}<\alpha-\chi_0<\alpha+\chi_0<\phi|_{D^+};$$
\item [(2)] For any $D\in\mathfrak{F}_{(\phi,\alpha)}$, there exist two subsets $D^+$ and $D^-$ of $D$ such that $f(D^-)\in\mathfrak{F}^-_{(\phi,\alpha)}$ and $f(D^+)\in\mathfrak{F}^+_{(\phi,\alpha)}$;
\item [(3)] There exists a constant $\lambda>1$ such that if $x,y$ belong to the same element
$D_0$ of $\mathfrak{F}_{(\phi,\alpha)}$ and if $f(x)$ and $f(y)$ belong also to the same element $D_1$ of $\mathfrak{F}_{(\phi,\alpha)}$ then
$d(f(x),f(y))\geq\lambda d(x,y)$.
\end{itemize}
Now, suppose that $H(p,f)$ is a partially hyperbolic homoclinic class and $\text{index}(q)<\text{index}(p)$ for some $q\in H(p,f)$. %Assume that $\mathcal{C}^{uu}$ be the unstable cone-field continusly extended to a small neighborhood of the homoclinic class. 
We say that $H(p,f)$ admits a flip-flop family with respect to a continuous function $\phi(x)$ and $\alpha$ if there is a flip-flop family $\mathfrak{F}_{(\phi,\alpha)}=\mathfrak{F}^-_{(\phi,\alpha)}\bigcup \mathfrak{F}^+_{(\phi,\alpha)}$
both contained in the local unstable lamination. Using the notion of blender, the authors in \cite{bbd} proved the following fact. Let $\mathcal{U}$ be an open set of diffeomorphisms such that for any $f\in\mathcal{U}$, there exist two hyperbolic periodic points $p_f,q_f$ with $\text{index}(q)<\text{index}(p)$, depending continuously on $f$ and in the same partially hyperbolic homoclinic class. Then there exists an open and dense subset $\mathcal{V}$ of $\mathcal{U}$ such that for any $f\in\mathcal{V}$ the homoclinic class $H(p_f,f)$ admits a Flop-flop family associated with $(\phi:=\log |Df^N|_{E^c}|, \alpha)$ with $\lambda^c(q)<\alpha<\lambda^c(p)$ (see Figure \ref{fig:2}).
\begin{figure}[h]
%\begin{tikzpicture}%[?options?]
%\put(0,-50){$f^N$}
%\put(-50,-105){$p$}
%\end{tikzpicture}
\[\includegraphics[scale=.7]{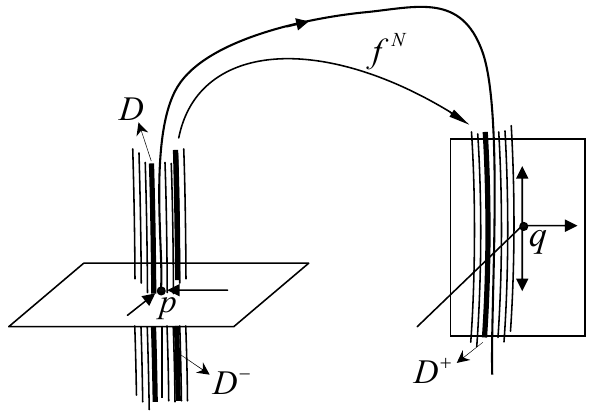}\]
\caption{Flip-flop family associated with $p$ and $q$}\label{fig:2}
\end{figure}
 By a classical generic argument, one can deduce the following generic version.
\begin{theorem}
There is a residual subset $\mathcal{R}$ of \textsl{Diff}$^{\,r}(M)$, $r\geq 1$, such that for any $f\in\mathcal{R}$, any partially hyperbolic homocliinic class $H(p,f)$, any periodic point $q$ of different index of that of $p$ and any $\lambda^c(p)<\alpha<\lambda^c(q)$ there is a constant $N$ and flip-flop family $\mathfrak{F}_{(\phi,\alpha)}=\mathfrak{F}^+_{(\phi,\alpha)}\bigcup \mathfrak{F}^-_{(\phi,\alpha)}$ for $f^N$ and function
$\phi:=\log|Df^N|_{E^c}|$. Furthermore, $\mathfrak{F}^-$ and $\mathfrak{F}^+$ may be chosen in such a way that $\mathfrak{F}^-$ and $\mathfrak{F}^+$ happen respectively in a neighborhood of $p$ and $q$.
\end{theorem}
We want to prove the main result of this section using the characteristic property of Flip-flop family. Let us return to abstract definition above.
%\begin{definition}
\begin{itemize}
\item ($\mathfrak{F}_{(\phi,\alpha)}$-segment). Let $T\in \mathbb{N}^\infty$. A $\mathfrak{F}_{(\phi,\alpha)}$-segment of lenghth $T$ is a sequence $\mathbf{D}=\{D_i\}_{i=0}^{i=T}$ such that $f(D_i)=D_{i+1}$, each $D_i$ contained in a member of $\mathfrak{F}_{(\phi,\alpha)}$ and $D_T$ is also a member of $\mathfrak{F}_{(\phi,\alpha)}$. $D_0$ and $D_T$ are respectively called the entrance and the exit of $\mathbf{D}$.
\item ($\beta$-controling). For $\beta>0$ and $t\leq T$, $\mathbf{D}$ is said to be $(\beta,t,T,\alpha)$ controlled if there is a set of control times $\mathcal{P}\subset \{0,1,\ldots,T\}$ such that
\begin{itemize}
\item $0,T\in\mathcal{P}$;
\item if $k<l$ and two consecutive elements of $\mathcal{P}$  then $l-k<t$ and $\frac{1}{l-k}|\phi_{l-k}(f^k(x))-\alpha|<\beta$, for all $x\in D_0$.
\end{itemize}
\item (Following $\tau$-pattern). For $\tau>1$, $\mathbf{D}$ follows $\tau$-pattern $s\in\{-,+\}^\mathbb{N}$ up to time time $T$ if for any $n\in [0,T]$ with $n\equiv 0\, (\textrm{mod}\, \tau)$, one has $f^{n+1}(D_0)$ is contained in $\mathfrak{F}^{s_n}_{(\phi,\alpha)}$
\end{itemize}
%\end{definition}
We can now state the distinctive property of flip-flop family. The following theorem borrows essentially from \cite{bbd}.
\begin{theorem}
Fix any member $D$ of the Flip-flop family $\mathfrak{F}$ and a pattern $s\in\{-,+\}^{\mathbb{N}}$. There are a monotone sequence $\{\beta_i\}$ of positive numbers converging to zero, a monotone sequence $\{t_i\}$ of positive integers converging to infinity, and a sequence of $\mathfrak{F}$-segments ($\mathbf{D}_k$) such that:
\begin{itemize}
\item [\textsc{C1.}] the entrance of $\mathbf{D}_k$ is contained in $D$;
\item [\textsc{C2.}] the length $T_k$ of the segment $\mathbf{D}_k$ goes to infinity with $k$;
\item [\textsc{C3.}] every point in the entrance of $\mathbf{D}_k$ is $(\beta_i,t_i,T_k,\alpha)$-controlled, for every $1\leq i\leq k$.
\item [\textsc{C4.}] $\mathbf{D}_k$ folows $\tau$-ppattern $s$ up to time $T_k$.
\end{itemize}
\end{theorem}
\subsection{Positive Entropy, Second Part of Theorem A}
Now, suppose that $s\in\{-,+\}^\mathbb{N}$ has dense orbit under the shift map. and let $x_k\in\mathbf{D_k}$, $\mathbf{D}_k\subset D\in\mathfrak{F}$, is $(\beta_k,t_k,T_k,\alpha)$-controlledd for $t_k\to\infty$ and $\beta_k\to 0$ provided by $\textsc{C1}$-$\textsc{C4}$. Let $x_k\to x\in D$. It is not difficult to see that $x$ is controlled in any scale and follows $\tau$-pattern $s$ up to infinity. In particular the central Lyapunov exponent of $x$ is equal to $\alpha$.

Now, for any $y\in\omega(x)$, define $\pi(y)\in \{-,+\}^\mathbb{N}$ by
\begin{equation*}
(\pi(y))_i:=\begin{cases}
\begin{array}{ccc}
- & f^i(y)\in \phi^{-1}([\alpha+\chi_0,+\infty]),\\
+ & f^i(y)\in \phi^{-1}([-\infty,\alpha-\chi_0]).
      \end{array}
 \end{cases}
%\end{eqnarray}
\label{e4}
\end{equation*}
Then $\pi$ is continuous. The mapping $\pi$ is also onto. For the proof, suppose that $\eta=(\eta_{-L},\ldots,\eta_0,\ldots,\eta_L)$ is a finite word of - and +. Since the orbit of $s$ is dense under the shift map, there is a sequence $\ell_i$ with $\ell_i\to\infty$ such that $(s_{\ell_i},\ldots,s_{2L+\ell_i})=\eta$. Let $f^{\tau\ell_i}(x)\to y\in\omega(x)$, then $\pi(y)=\eta$. Since $y\in\omega(x)$, the central Lyapunov exponent of $y$ is also equals to $\alpha$.

Suppose that $\mathcal{K}_\alpha$ be the set all $x\in K$ with the following property: $x\in D\in\mathcal{F}$ is controlled at any scale and follows a $s$-pattern. Put
$$\mathbf{K}_\alpha=\bigcup_{x\in\mathcal{K}_\alpha} \omega(x).$$
Then all point in $\mathbf{K}_\alpha$ has central Lyapunov exponent equal to $\alpha$ and also $h_{\text{top}}(f|_{\mathbf{K}_\alpha})\geq\log 2$. The variational principle for entropy implies the existence of measure of positive entropy supported on the homoclinic class.
\begin{corollary}
The set $\mathbf{K}_\alpha$ is contained in local unstable manifold of $H(p,f)$. In particular, $\mathbf{K}_\alpha$ is contained in $H(p,f)$ provided that $H(p,f)$ is an bi-Lyapunov stable.
\end{corollary}
\section*{Acknowledgements}
During the preparation of this article the author was partially
supported by grant from IPM (No. 95370127). He also thanks ICTP
for supporting through the association schedule.

\end{document}